\newcommand{\uloopr}[1]{\ar@'{@+{[0,0]+(-4,5)}@+{[0,0]+(0,10)}@+{[0,0] +(4,5)}}^{#1}}
\newcommand{\uloopd}[1]{\ar@'{@+{[0,0]+(5,4)}@+{[0,0]+(10,0)}@+{[0,0]+ (5,-4)}}^{#1}}
\newcommand{\dloopr}[1]{\ar@'{@+{[0,0]+(-4,-5)}@+{[0,0]+(0,-10)}@+{[0, 0]+(4,-5)}}_{#1}}
\newcommand{\dloopd}[1]{\ar@'{@+{[0,0]+(-5,4)}@+{[0,0]+(-10,0)}@+{[0,0 ]+(-5,-4)}}_{#1}}
\newcommand{\luloop}[1]{\ar@'{@+{[0,0]+(-8,2)}@+{[0,0]+(-10,10)}@+{[0, 0]+(2,2)}}^{#1}}
\newtheorem{lem}{Lemma}[section]
\newtheorem{theor}[lem]{Theorem}
\newtheorem{prop}[lem]{Proposition}
\theoremstyle{definition}
\newtheorem{defi}[lem]{Definition}
\newtheorem{exem}[lem]{Example}
\newcommand{\nc}{\newcommand}
\nc{\fin}{\mathrm{fin}}
\nc{\points}{\mathrm{pt }}
\nc{\iden}{\mathrm{id}}
\nc{\op}{\mathrm{op}}
\nc{\qcoh}{\mathrm{qcoh}}
\nc{\supph}{\mathrm{supph}}
\nc{\set}{\mathrm{set}}
\nc{\id}{\mathrm{id}}
\nc{\pr}{\mathrm{Pr}}
\nc{\rhom}{\mathrm{RHom}}
\nc{\mor}{\mathrm{Mor}}
\nc{\spec}{\mathrm{Spec }}
\nc{\Hom}{\mathrm{Hom}}
\nc{\spc}{\mathrm{Spc}}
\nc{\pt}{\mathrm{pt}}
\nc{\cl}{\mathrm{Cl}}
\nc{\im}{\mathrm{im}}
\nc{\cone}{\mathrm{cone}}
\nc{\perf}{\mathrm{perf}}
\nc{\loc}{\mathrm{Loc}}
\begin{document}

\title[Reconstruction of quasi-compact quasi-separated schemes]{Reconstruction of quasi-compact quasi-separated schemes from categories of perfect complexes}%
\author{Stella Anevski}
\address{} \email{}

\date{}

\maketitle
\begin{abstract}
We show how to recover the underlying topological space of a quasi-compact quasi-separated scheme from the tensor triangulated structure on its category of perfect complexes. 
\end{abstract}
\section*{Introduction} \noindent It has been known for some time that the topology of a noetherian scheme is encoded in the tensor triangulated category of perfect complexes on the scheme. In \cite{balmer}, P. Balmer defines the {\em spectrum} of a tensor triangulated category, and shows that this construction recovers the underlying topological space of a noetherian scheme from its category of perfect complexes.\\
 \\
In this note, we show that the hypothesis of noetherianness can be relaxed at the cost of considering a slightly more complicated reconstruction procedure. Our main result is Theorem \ref{slutsats}, which states that the underlying topological space of a quasi-compact and quasi-separated scheme $(X,{\mathcal O}_X)$ can be reconstructed up to homeomorphism as a certain space of subsets of thick subcategories of $D^{\perf}(X)$, the tensor triangulated category of perfect complexes on $(X,{\mathcal O}_X)$.\\
 \\
The exposition is organized as follows. In $\S 1$, we establish the properties of perfect complexes that are needed in proving the reconstruction result. Inspired by M. B\"okstedt (cf. \cite{neeman}), we construct a map ${\mathcal Loc}$, from tensor triangulated categories to lattices in $\S 2$. When applied to the category $D^{\perf}(X)$, it turns out that this map recovers the lattice ${\mathcal U}(X)$ of open subsets of $X$ (cf. Theorem \ref{faith}). In $\S 3$, we define a map $\pt$, from lattices to topological spaces, and show that it gives a homeomorphism $\pt ({\mathcal U}(X))\simeq X$, for any sober topological space $X$ (cf. Proposition \ref{sober} $(ii)$). The map which recovers the quasi-compact and quasi-separated scheme $(X,{\mathcal O}_X)$ from $D^{\perf}(X)$ is the composition $\pt\circ {\mathcal Loc}$ (cf. Theorem \ref{slutsats}).\\
\section{Preliminaries on perfect complexes}
\noindent Let $(X,{\mathcal O}_X)$ be a quasi-compact and quasi-separated scheme. Recall that a complex $C$ of sheaves of ${\mathcal O}_X$-modules is {\em perfect} if $C$ is locally quasi-isomorphic to some bounded complex of locally free ${\mathcal O}_X$-modules of finite type. The derived category of the category of ${\mathcal O}_X$-modules carries a natural tensor triangulated structure, which descends to the full subcategory $D^{\perf}(X)$ on the perfect complexes. \\
 \\
We define the {\em support} of a complex $C\in D^{\perf}(X)$ to be the closed subset 
$$\begin{array}{rl} 
\supph C= \{x\in X; C_x\not\simeq 0\},
\end{array}\,$$
of those points $x\in X$ at which the stalk complex $C_x$ of ${\mathcal O}_{X,x}$-modules is not acyclic.
\begin{lem} \label{quicpt}
For any commutative unital ring $R$, and any $C\in D^{\perf}(\spec R)$, the set 
$$\begin{array}{rl} 
\spec R\setminus \supph C
\end{array}\,$$
is quasi-compact.
\end{lem}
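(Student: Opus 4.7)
The plan is to reduce the assertion to the Noetherian case by descending $C$ to a finitely generated subring of $R$. Represent $C$ by a bounded complex $P^\bullet = (P^a \to \cdots \to P^b)$ of finitely generated projective $R$-modules, realize each $P^i$ as the image of an idempotent $e_i \in M_{n_i}(R)$, and present the differentials as matrices $D^i$ over $R$ (compatibly with the $e_i$). Let $R_0 \subseteq R$ be the subring generated over $\mathbb{Z}$ by the (finitely many) entries of all the $e_i$ and $D^i$. Since the relations $e_i^2 = e_i$ and $D^{i+1} D^i = 0$ are polynomial identities in these entries, they hold already in $R_0$, so the same data defines a perfect complex $P_0^\bullet$ on $\spec R_0$ with $P_0^\bullet \otimes_{R_0} R \simeq P^\bullet$.

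The crucial step is to verify that $\supph P^\bullet = \pi^{-1}(\supph P_0^\bullet)$, where $\pi \colon \spec R \to \spec R_0$ is induced by the inclusion. Fix $\mathfrak{p} \in \spec R$ with contraction $\mathfrak{q} = \pi(\mathfrak{p})$. The residue field criterion for perfect complexes over a local ring --- namely, that a bounded complex of finitely generated projectives over a local ring is acyclic iff its tensor product with the residue field is acyclic --- reduces the question to whether $P_0^\bullet \otimes_{R_0} k(\mathfrak{p})$ is acyclic. Since $k(\mathfrak{p})$ is a field extension of $k(\mathfrak{q})$, one has
$$P_0^\bullet \otimes_{R_0} k(\mathfrak{p}) \;\simeq\; \bigl(P_0^\bullet \otimes_{R_0} k(\mathfrak{q})\bigr) \otimes_{k(\mathfrak{q})} k(\mathfrak{p}),$$
and faithful flatness of the field extension forces this to be acyclic precisely when $P_0^\bullet \otimes_{R_0} k(\mathfrak{q})$ is, i.e., precisely when $\mathfrak{q} \in \supph P_0^\bullet$.

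Because $R_0$ is a finitely generated $\mathbb{Z}$-algebra, it is Noetherian, so $\supph P_0^\bullet = V(I_0)$ for some finitely generated ideal $I_0 \subseteq R_0$. Pulling back under $\pi$ yields $\supph P^\bullet = V(I_0 R)$, and $I_0 R$ is again finitely generated, so $\spec R \setminus \supph P^\bullet$ is a finite union of basic open subsets and hence quasi-compact. I expect the main obstacle to be the verification of the residue field criterion; this is standard but relies on the observation that any bounded complex of finitely generated free modules over a local ring admits a minimal representative whose differentials have entries in the maximal ideal, so that Nakayama's lemma applies directly to the residue field reduction.
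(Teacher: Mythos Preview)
Your argument is correct and follows essentially the same route as the paper's proof: descend the perfect complex to a Noetherian (finitely generated) subring, identify $\supph C$ as the preimage of the support downstairs, and conclude that it is cut out by a finitely generated ideal so that its complement is a finite union of basic opens. You simply spell out what the paper asserts without justification---namely, how to build the Noetherian subring (via the matrix entries of the idempotents and differentials) and why support is compatible with pullback (via the residue-field criterion and faithful flatness); note only the slip in the last clause of your second paragraph, where ``$\mathfrak{q}\in\supph P_0^\bullet$'' should read ``$\mathfrak{q}\notin\supph P_0^\bullet$'' to match the acyclicity statement.
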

\begin{proof}
Let $S\subset R$ be a noetherian subring and $C'\in D^{\perf}(\spec S)$ a complex such that $C=C'\otimes_{S}R$. Let $\varphi:\spec R\rightarrow\spec S$ be the natural map. Then $\supph C=\varphi^{-1}(\supph C')$ corresponds to a finitely generated ideal $(f_1,...,f_n)\subset R$. Thus
$$\begin{array}{rl} 
\spec R\setminus \supph C=\bigcup_{i=1}^{n}\spec R[f_{i}^{-1}]
\end{array}\,$$
is a finite union of quasi-compacts.
\end{proof}
\begin{lem} \label{triangles}
Let $a:A\to {\mathcal O}_X$ be a morphism in $D^{\perf}(X)$. If ${\mathcal A}$ is a thick triangulated subcategory of $D^{\perf}(X)$, and $B\in D^{\perf}(X)$ is such that $B\otimes \cone(a)\in {\mathcal A}$, then
$$\begin{array}{rl} 
B\otimes \cone(\otimes^n a)\in {\mathcal A},
\end{array}\,$$
for all $n\geq 1$.
\end{lem}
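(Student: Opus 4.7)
The plan is to induct on $n$, with base case $n=1$ given by the hypothesis. For the inductive step, I factor the morphism $\otimes^n a\colon A^{\otimes n}\to\mathcal{O}_X$ as the two-term composition
\[
A^{\otimes n}\;=\;A\otimes A^{\otimes(n-1)}\;\xrightarrow{\id_A\otimes\otimes^{n-1}a}\;A\otimes\mathcal{O}_X\;=\;A\;\xrightarrow{a}\;\mathcal{O}_X,
\]
and apply the octahedron axiom. Since $\cone(\id_A\otimes\otimes^{n-1}a)\cong A\otimes\cone(\otimes^{n-1}a)$, the octahedron produces a distinguished triangle
\[
A\otimes\cone(\otimes^{n-1}a)\;\longrightarrow\;\cone(\otimes^n a)\;\longrightarrow\;\cone(a)\;\longrightarrow\;A\otimes\cone(\otimes^{n-1}a)[1]
\]
in $D^{\perf}(X)$. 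After tensoring with $B$, thickness of $\mathcal{A}$ allows me to conclude $B\otimes\cone(\otimes^n a)\in\mathcal{A}$ as soon as both $B\otimes\cone(a)$ and $B\otimes A\otimes\cone(\otimes^{n-1}a)$ have been shown to lie in $\mathcal{A}$.

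The first of these is immediate from the hypothesis. The second is the main obstacle: the inductive hypothesis yields only $B\otimes\cone(\otimes^{n-1}a)\in\mathcal{A}$, and a general thick triangulated subcategory is not closed under tensoring with arbitrary objects, so the extra factor of $A$ must be absorbed by a separate argument. The natural approach is to tensor the defining triangle $A\xrightarrow{a}\mathcal{O}_X\to\cone(a)\to A[1]$ with $B\otimes\cone(\otimes^{n-1}a)$, giving
\[
B\otimes A\otimes\cone(\otimes^{n-1}a)\;\longrightarrow\;B\otimes\cone(\otimes^{n-1}a)\;\longrightarrow\;B\otimes\cone(a)\otimes\cone(\otimes^{n-1}a)\;\longrightarrow,
\]
whose middle term is in $\mathcal{A}$ by the inductive hypothesis. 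One is thus reduced to proving $B\otimes\cone(a)\otimes\cone(\otimes^{n-1}a)\in\mathcal{A}$, which I plan to settle by a nested induction feeding the hypothesis $B\otimes\cone(a)\in\mathcal{A}$ back into the same octahedral construction applied to the simpler compositions defining $\otimes^{n-1}a$.

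In summary, each step of the primary induction is driven by a single octahedron, and the principal challenge will be arranging the secondary induction so that the extra $A$-factor is systematically absorbed without ever appealing to tensor-ideal closure of $\mathcal{A}$ — it is precisely at this point that the difference between a thick triangulated subcategory and a thick tensor ideal becomes delicate.
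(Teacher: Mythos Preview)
Your octahedral setup and primary induction are exactly what the paper does: it derives the triangle $A\otimes\cone(\otimes^{n}a)\to\cone(\otimes^{n+1}a)\to\cone(a)$, tensors with $B$, and then simply asserts that ``one obtains the desired result by induction on $n$'' without further comment. You have gone further than the paper by explicitly isolating the obstruction---the inductive hypothesis yields $B\otimes\cone(\otimes^{n-1}a)\in\mathcal{A}$ while the triangle requires $B\otimes A\otimes\cone(\otimes^{n-1}a)\in\mathcal{A}$---and by recognising that this is precisely where the distinction between a thick subcategory and a thick $\otimes$-ideal bites.

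However, your proposed secondary induction cannot close the gap. Unwinding it, the base case you would need is $B\otimes\cone(a)^{\otimes 2}\in\mathcal{A}$; but tensoring the triangle $A\to\mathcal{O}_X\to\cone(a)$ with $B\otimes\cone(a)$ shows this is equivalent to $B\otimes A\otimes\cone(a)\in\mathcal{A}$, which in turn (via the octahedral triangle for $n=2$) is equivalent to $B\otimes\cone(\otimes^{2}a)\in\mathcal{A}$, i.e.\ the very case $n=2$ of the lemma. The argument is circular. In fact the lemma is \emph{false} for arbitrary thick triangulated subcategories: on $X=\mathbb{P}^{1}_{k}$ take $\mathcal{A}=\langle\mathcal{O}_X\rangle$ (which, since $\mathcal{O}_X$ is exceptional, consists exactly of finite sums of shifts of $\mathcal{O}_X$), set $B=\mathcal{O}_X(1)$, $A=\mathcal{O}_X\oplus\mathcal{O}_X(-1)[-1]$, and let $a$ be the first projection. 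Then $\cone(a)\cong\mathcal{O}_X(-1)$, so $B\otimes\cone(a)\cong\mathcal{O}_X\in\mathcal{A}$; yet $\cone(a\otimes a)\cong\mathcal{O}_X(-1)^{\oplus 2}\oplus\mathcal{O}_X(-2)[-1]$, so $B\otimes\cone(a\otimes a)\cong\mathcal{O}_X^{\oplus 2}\oplus\mathcal{O}_X(-1)[-1]\notin\mathcal{A}$. Both your argument and the paper's are only valid under the additional hypothesis that $\mathcal{A}$ is a thick \emph{tensor ideal}; with that assumption the extra factor of $A$ is absorbed immediately and the single induction (yours and the paper's) goes through with no secondary argument needed.
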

\begin{proof}
One has exact triangles
$$\vcenter{\xymatrix{
 A\ar[rr]^{a}
& & {\mathcal O}_X\ar[dl]^{}  \\
&\cone(a)  \ar[ul]|{\circ} & }} \,,$$
$$\vcenter{\xymatrix{
 \otimes^n A\ar[rr]^{\otimes^n a}
& & {\mathcal O}_X\ar[dl]^{}  \\
&\cone(\otimes^n a)  \ar[ul]|{\circ} & }} \,,$$
$$\vcenter{\xymatrix{
A\otimes (\otimes^n A)\ar[rr]^{1_A\otimes (\otimes^n a)}
& & A\ar[dl]^{}  \\
&A\otimes \cone(\otimes^n a)  \ar[ul]|{\circ} & }} \,.$$
Since $\otimes^{n+1}a$ can be identified with the composition $a\circ (1_A\otimes (\otimes^n a))$, one obtains the exact triangle
$$\vcenter{\xymatrix{
A\otimes \cone(\otimes^n a)\ar[rr]^{}
& & \cone(\otimes^{n+1} a)\ar[dl]^{}  \\
& \cone(a)  \ar[ul]|{\circ} & }} \,$$
from the octahedral axiom. Using the exact triangle resulting from tensoring this one with $B$, one obtains the desired result by induction on $n$.
\end{proof}
\begin{lem} \label{nilpotence}
Let $(X,{\mathcal O}_X)$ be a quasi-compact and quasi-separated scheme. Let $A,B\in D^{\perf}(X)$, and let $a:A\to {\mathcal O}_X$ be a morphism in $D^{\perf}(X)$. If $a\otimes k(x)=0$ in the derived category $D(k(x))$ of $k(x)$-modules, for all $x\in \supph B$, then there is an $n\geq 1$ such that $1_B\otimes (\otimes^n a)=0$ in $D^{\perf}(X)$.
\end{lem}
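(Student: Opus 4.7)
\noindent My plan is to reformulate the assertion as a nilpotence statement in a graded algebra, verify the hypothesis fibrewise, and then globalise by a reduction to the noetherian affine case.

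Using dualisability of the perfect complex $B$, the adjunction $\Hom((\otimes^n A)\otimes B,B)\simeq \Hom(\otimes^n A,B^{\vee}\otimes B)$ identifies $1_B\otimes(\otimes^n a)$ with the composite $\eta_B\circ(\otimes^n a)$, where $\eta_B\colon \mathcal{O}_X\to B^{\vee}\otimes B$ is the unit of the algebra $B^{\vee}\otimes B\simeq\underline{\mathrm{End}}(B)$. Under the graded multiplication on $\bigoplus_{n\geq 0}\Hom(\otimes^n A,B^{\vee}\otimes B)$ coming from the algebra structure on $B^{\vee}\otimes B$, the $n$-th power of $\alpha:=\eta_B\circ a$ equals $\eta_B\circ(\otimes^n a)$, so the lemma reduces to showing that $\alpha$ is nilpotent in this graded algebra.

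To check that $\alpha$ vanishes at every residue field $k(x)$, note that for $x\notin\supph B$ one has $(B^{\vee}\otimes B)\otimes k(x)\simeq 0$, while for $x\in\supph B$ the morphism $\alpha\otimes k(x)$ factors through $a\otimes k(x)$, which vanishes by hypothesis. Hence $\alpha\otimes k(x)=0$ for every $x\in X$.

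To obtain a uniform $n$ from this pointwise vanishing, I would cover $X$ by finitely many affines $\spec R_i$ using quasi-compactness, and on each affine descend to a noetherian subring along the lines of the argument in Lemma \ref{quicpt}. Over a noetherian affine, the vanishing of $\alpha$ at every closed point implies, by a standard perfect-complex nilpotence argument, that $\alpha^{n_i}=0$ on $\spec R_i$ for some finite $n_i$. Taking $n=\max_i n_i$ then yields $\alpha^n=0$ globally, equivalently $1_B\otimes(\otimes^n a)=0$. The main obstacle is precisely this final local nilpotence step: extracting a specific exponent $n$ from fibrewise vanishing over residue fields. It is a version of the Hopkins--Neeman--Thomason nilpotence theorem, and uses both perfectness (so that eventual annihilation of the finitely generated cohomology upgrades to null-homotopy after a further bounded increase in $n$) and noetherianness (so that finitely many exponents suffice over the chosen cover).
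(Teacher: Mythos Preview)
The paper's proof is a single citation of Thomason's Theorem~3.8 in \cite{thomason}, which is stated and proved there for arbitrary quasi-compact quasi-separated schemes; no reduction to the noetherian case is needed, and what you identify as the ``main obstacle'' is exactly the content of that theorem. Your reformulation via dualisability and the graded algebra $\bigoplus_n\Hom(\otimes^nA,B^{\vee}\otimes B)$ is correct and is essentially how Thomason sets things up.

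There is, however, a genuine gap in your globalisation. The inference ``$\alpha^{n_i}=0$ on each $\spec R_i$, hence $\alpha^{\max_i n_i}=0$ on $X$'' is not valid: a morphism in the derived category that restricts to zero on every member of an open cover need not vanish globally. Mayer--Vietoris only shows that such a morphism lies in the image of a connecting homomorphism from a shifted $\Hom$ on the overlaps, so one must multiply by further powers of $\alpha$ to annihilate it; the exponent grows (roughly additively) with the number of affines, and controlling this is part of the substance of Thomason's argument rather than a triviality. Your noetherian descent has a related loose end: after descending $A$, $B$, $a$ to a noetherian $S\subset R_i$, you need fibrewise vanishing at \emph{every} point of $\spec S$, whereas your hypothesis only furnishes it at residue fields of $R_i$, and $\spec R_i\to\spec S$ need not be surjective. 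Both issues are absorbed into Thomason's proof, which is why the paper simply invokes it.
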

\begin{proof}
This follows directly from Theorem $3.8$ in \cite{thomason}.
\end{proof}
\begin{prop} \label{thelemma}
Let $(X,{\mathcal O}_X)$ be a quasi-compact and quasi-separated scheme, and let $B,D\in D^{\perf}(X)$ be such that
$$\begin{array}{rl} 
\supph B\subset \supph D.
\end{array}\,$$
Then $B$ is in the thick triangulated subcategory generated by $D$.
\end{prop}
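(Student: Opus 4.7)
Let $\mathcal{A}$ denote the thick triangulated subcategory of $D^{\perf}(X)$ generated by $D$; the goal is to show $B\in\mathcal{A}$. My strategy is to produce a morphism $a:A\to\mathcal{O}_X$ in $D^{\perf}(X)$ satisfying
\begin{enumerate}
\item[(1)] $a\otimes k(x)=0$ in $D(k(x))$ for every $x\in\supph B$, and
\item[(2)] $B\otimes\cone(a)\in\mathcal{A}$.
\end{enumerate}
Granted such an $a$, Lemma \ref{nilpotence} supplies $n\geq 1$ with $1_B\otimes(\otimes^n a)=0$, so that the exact triangle defining $\cone(\otimes^n a)$ splits after tensoring with $B$ and exhibits $B$ as a direct summand of $B\otimes\cone(\otimes^n a)$. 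Lemma \ref{triangles} promotes (2) to $B\otimes\cone(\otimes^n a)\in\mathcal{A}$, and closure of $\mathcal{A}$ under direct summands completes the argument.

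To construct $a$ I would first pass to a finite affine cover $X=\bigcup U_j$ with $U_j=\spec R_j$. Lemma \ref{quicpt} applied to $D|_{U_j}$ provides elements $f_{j,1},\dots,f_{j,k_j}\in R_j$ with $\supph D\cap U_j=V(f_{j,1},\dots,f_{j,k_j})$; since $\supph B\subseteq\supph D\subseteq V(f_{j,\ell})$, each local morphism $f_{j,\ell}:\mathcal{O}_{U_j}\to\mathcal{O}_{U_j}$ vanishes at every point of $\supph B\cap U_j$ and therefore satisfies the local form of (1). One then either assembles these into one global morphism $a$, or works with a finite family $a_1,\dots,a_N$ and iterates Lemmas \ref{nilpotence} and \ref{triangles} one variable at a time, absorbing $\cone(a_i)$-factors successively into the $B$ appearing on the left.

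The main obstacle is condition (2). Because $\mathcal{A}$ is only a thick subcategory, and not a thick tensor ideal, the statement ``$\cone(a)\in\mathcal{A}$'' does not by itself give ``$B\otimes\cone(a)\in\mathcal{A}$''. I expect this to be handled by an induction in which $B$ is successively replaced by a complex of strictly smaller support: each tensoring with a $\cone(f_{j,\ell})$ shrinks the problem (since the new object is supported in the intersection of $\supph B$ with $V(f_{j,\ell})$), while Lemma \ref{triangles} keeps the intermediate objects inside $\mathcal{A}$, and the induction terminates when the remaining object is manifestly built from $D$ by the thick-subcategory operations. Making this nested induction precise, and checking that perfectness and the global structure are preserved throughout, is where I anticipate the real technical work to lie.
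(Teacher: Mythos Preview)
Your overall plan matches the paper's: produce $a:A\to\mathcal{O}_X$ satisfying (1) and (2), then use Lemmas~\ref{nilpotence} and~\ref{triangles} to exhibit $B$ as a summand of $B\otimes\cone(\otimes^n a)\in\mathcal{A}$. The gap is entirely in the construction of $a$. The paper does not patch local equations; it takes $f:\mathcal{O}_X\to\rhom(D,D)$ adjoint to $1_D$ and lets $a$ be the remaining edge of the triangle on $f$, so that $\cone(a)\simeq\rhom(D,D)\simeq D^\vee\otimes D$. Condition (1) is then immediate: for $x\in\supph B\subseteq\supph D$ one has $D\otimes k(x)\not\simeq 0$, so $f\otimes k(x)$ is a split monomorphism over the field $k(x)$, forcing $a\otimes k(x)=0$. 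Condition (2) is equally immediate, since $B\otimes\cone(a)\simeq(B\otimes D^\vee)\otimes D$ is a tensor multiple of $D$ itself. Your worry that (2) might fail is thus dissolved not by an induction but by the specific shape of $\cone(a)$---though for this step one must read $\langle D\rangle$ as the thick \emph{tensor ideal} generated by $D$, which is what Thomason's theorem classifies and what the argument needs; read literally as a plain thick subcategory, the proposition already fails on $\mathbb{P}^1$ with $D=\mathcal{O}(-1)$ and $B=\mathcal{O}$.

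Your proposed route does not close. The sections $f_{j,\ell}$ live only on the affine opens $U_j$ and do not assemble into a morphism in $D^{\perf}(X)$, and the ``induction on support'' you sketch has neither a base case (nothing explains why an object of small support should be manifestly built from $D$) nor a well-founded ordering to descend along, since $X$ is not assumed noetherian. The missing idea is precisely the coevaluation $\mathcal{O}_X\to D^\vee\otimes D$: it is global, its cone is by construction of the form $E\otimes D$, and it becomes a split monomorphism at every residue field in $\supph D$.
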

\begin{proof}
Let $\langle D\rangle$ denote the thick triangulated subcategory generated by $D$. Since $D$ is perfect, there is an isomorphism $\rhom(D,{\mathcal O}_X)\otimes D\simeq \rhom(D,D)$, so $\rhom(D,D)\in \langle D\rangle$. Let 
$$\begin{array}{rl} 
f:{\mathcal O}_X\to \rhom(D,D)
\end{array}\,$$
be the morphism corresponding to $1_D:D\to D$, and let 
$$\begin{array}{rl} 
a:A\to {\mathcal O}_X
\end{array}\,$$
be the edge opposite to the vertex $\rhom(D,D)$ in the exact triangle having $f$ as an edge. Then $\cone(a)\simeq \rhom(D,D)\in \langle D\rangle$, so $B\otimes \cone(a)\in \langle D\rangle$. By Lemma \ref{triangles}, we have
$$\begin{array}{rl} 
B\otimes \cone(\otimes^n a)\in \langle D\rangle,
\end{array}\,$$
for all $n\geq 1$.\\
Next, we show that there is an $n\geq 1$ such that $1_B\otimes (\otimes^n a)=0$ in $D^{\perf}(X)$. By Lemma \ref{nilpotence}, it suffices to show that for all $x\in \supph B$, we have $a\otimes k(x)=0$ in $D(k(x))$. But if $x\in \supph B$, then $D\otimes k(x)\not\simeq 0$, since $\supph B\subset \supph D$. Thus the map 
$$\begin{array}{rl} 
k(x)\to \rhom_{k(x)}(D\otimes k(x), D\otimes k(x)),
\end{array}\,$$
corresponding to $1_{D\otimes k(x)}: D\otimes k(x)\to D\otimes k(x)$ is non-zero, and so is a split monomorphism in $D^{\perf}(\spec k(x))$. But under the isomorphism $\rhom_{{\mathcal O}_X}(D,D)\otimes k(x)\simeq \rhom_{k(x)}(D\otimes k(x),D\otimes k(x))$ (cf. \cite{sga6}, I $7.1.2$), this split monomorphism corresponds to $f\otimes k(x)$. Since $f\circ a=0$, we get $a\otimes k(x)=0$.\\
Applying $\Hom_{D({\mathcal O}_X)}(-,B)$ to the exact triangle
$$\vcenter{\xymatrix{
 B\otimes (\otimes^n A)\ar[rr]^{1_B\otimes(\otimes^n a)}
& & B\otimes {\mathcal O}_X\ar[dl]^{}  \\
&B\otimes \cone(\otimes^n a)  \ar[ul]|{\circ} }} \,,$$
and considering the associated long exact Puppe sequence, one sees that 
$$\begin{array}{rl} 
B\simeq B\otimes {\mathcal O}_X\to B\otimes \cone(\otimes^n a)
\end{array}\,$$
is a split monomorphism. Hence $B\in \langle D\rangle$, since $\langle D\rangle$ is thick and $B$ identifies with a direct summand of $B\otimes \cone(\otimes^n a)\in \langle D\rangle$.
\end{proof}
\section{Recovering open subsets from perfect complexes}
\noindent 
Let ${\mathcal K}$ be a tensor triangulated category. A thick subcategory ${\mathcal A}$ of ${\mathcal K}$ is said to be {\em principal} if it is the triangulated subcategory generated by an element $C\in {\mathcal A}$. If this is the case, we use the notation ${\mathcal A}=\langle C\rangle$. We denote the set of principal thick subcategories of ${\mathcal K}$ by $PS({\mathcal K})$.\\
 \\
A subset $S\subset PS({\mathcal K})$ is said to be {\em filtering} if \\
 $(FS1)$ $\langle C\rangle\in S$ and $\langle C\rangle \subset \langle D\rangle$ implies $\langle D\rangle\in S$ for all $\langle D\rangle\in PS({\mathcal K})$, and\\
 $(FS2)$ for all $\langle C\rangle, \langle D\rangle\in S$, there exists a $\langle B\rangle \in S$ such that $\langle B\rangle \subset \langle C\rangle$ and $\langle B\rangle \subset \langle D\rangle$.\\
We denote the set of filtering subsets of $PS({\mathcal K})$ by ${\mathcal Loc}({\mathcal K})$
\begin{theor} \label{faith}
If $(X,{\mathcal O}_X)$ is a quasi-compact and quasi-separated scheme, then there is an inclusion preserving bijection
$$\begin{array}{rl} 
{\mathcal U}(X)\leftrightarrow  {\mathcal Loc}(D^{\perf}(X))
\end{array}\,$$
between the set of open subsets of $X$ and the set of filtering subsets of $PS(D^{\perf}(X))$.
\end{theor}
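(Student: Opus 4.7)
The plan is to exhibit explicit maps in both directions and prove that they are mutually inverse. Given $U\in {\mathcal U}(X)$, set
$$S_U := \{\langle C\rangle\in PS(D^{\perf}(X)) \,:\, X\setminus \supph C\subset U\},$$
and given $S\in {\mathcal Loc}(D^{\perf}(X))$, set
$$U_S := \bigcup_{\langle C\rangle\in S}(X\setminus \supph C),$$
which is open in $X$. Both assignments are visibly inclusion preserving.

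The main tool in the verification is the equivalence $\langle C\rangle\subset \langle D\rangle \Leftrightarrow \supph C\subset \supph D$. The forward direction follows by induction on the construction of $\langle D\rangle$, since $\supph$ is subadditive on exact triangles and compatible with shifts and direct summands; the reverse direction is exactly Proposition \ref{thelemma}. Granted this equivalence, (FS1) for $S_U$ is immediate: if $\supph C\subset \supph D$ and $X\setminus \supph C\subset U$, then $X\setminus \supph D\subset U$. For (FS2), take $B=C\otimes D$, so that $\supph B=\supph C\cap \supph D$; then $\langle B\rangle\subset \langle C\rangle$ and $\langle B\rangle\subset \langle D\rangle$ by the equivalence, and $X\setminus \supph B=(X\setminus \supph C)\cup (X\setminus \supph D)\subset U$ places $\langle B\rangle$ in $S_U$.

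Next I verify $U_{S_U}=U$ and $S_{U_S}=S$. The equality $U_{S_U}=U$ rests on two facts about qcqs schemes: the quasi-compact opens form a basis of the topology, and every quasi-compact open $V\subset X$ is of the form $X\setminus \supph C$ for some perfect $C$ (Koszul complexes in the affine case, Thomason's gluing in general). Together these let every point of $U$ sit inside some quasi-compact open $V\subset U$ of the correct form. For $S_{U_S}=S$, the inclusion $S\subset S_{U_S}$ is immediate. Conversely, suppose $\langle C\rangle\in S_{U_S}$. Then $X\setminus \supph C\subset \bigcup_{\langle D\rangle\in S}(X\setminus \supph D)$, and Lemma \ref{quicpt} guarantees $X\setminus \supph C$ is quasi-compact, so finitely many $\langle D_1\rangle,\ldots,\langle D_n\rangle\in S$ suffice, giving $\bigcap_i \supph D_i\subset \supph C$. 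Iterating (FS2) produces a $\langle B\rangle\in S$ with $\langle B\rangle\subset \langle D_i\rangle$ for each $i$, hence $\supph B\subset \bigcap_i \supph D_i\subset \supph C$; the support equivalence then yields $\langle B\rangle\subset \langle C\rangle$, and (FS1) forces $\langle C\rangle\in S$.

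The principal obstacle is the realization step used in $U_{S_U}=U$, namely the identification of quasi-compact opens of a general qcqs scheme with complements of supports of perfect complexes. The affine case is routine via Koszul complexes, but the global statement is the genuinely substantive external input and is essentially Thomason's result; the remainder of the argument is a bookkeeping exercise built on Proposition \ref{thelemma} and the quasi-compactness supplied by Lemma \ref{quicpt}.
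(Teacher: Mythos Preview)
Your proof is correct and follows essentially the same route as the paper: your maps $U\mapsto S_U$ and $S\mapsto U_S$ are exactly the paper's $g$ and $f$ rewritten via complements, and the two verifications hinge on the same ingredients---Thomason's realization of (complements of) quasi-compact opens as supports of perfect complexes for one direction, and the quasi-compactness from Lemma~\ref{quicpt} together with Proposition~\ref{thelemma} for the other. The only noteworthy difference is organizational: you isolate the equivalence $\langle C\rangle\subset\langle D\rangle\Leftrightarrow \supph C\subset\supph D$ as a lemma and explicitly check $(FS1)$ and $(FS2)$ for $S_U$ (using $B=C\otimes D$), steps the paper leaves implicit.
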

\begin{proof}
Consider the maps
$$\begin{array}{rl} 
{\mathcal Loc}(D^{\perf}(X)) &\stackrel{f}{\rightarrow} {\mathcal U}(X),\\
S &\mapsto X\setminus \bigcap_{\langle C\rangle\in S}  \supph  C,
\end{array}\,$$
and
$$\begin{array}{rl}
{\mathcal U}(X) &\stackrel{g}{\rightarrow}  {\mathcal Loc}(D^{\perf}(X)),\\
U &\mapsto \{\langle C\rangle; X\setminus U \subset \supph C\}.
\end{array}\,$$
Note first that $f\circ g=\iden$, since given $x\in U$ there is a $C\in D^{\perf}(X)$ with $X\setminus U\subset \supph C$ but $x\notin \supph C$. Indeed, let $\spec R_x$ be an open affine neighborhood of $x$ such that $\spec R_x\subset  U$, and choose $C$ with $\supph C=X\setminus \spec R_x$ (cf. Lemma $3.4$ in \cite{thomason}).\\
In order to show that $g\circ f=\iden$, let $S\in {\mathcal Loc}(D^{\perf}(X))$ and $D\in D^{\perf}(X)$ be such that
$$\begin{array}{rl} 
\bigcap_{\langle C\rangle\in S}\supph C \subset \supph D.
\end{array}\,$$
We show that $\langle D\rangle\in S$. By Lemma \ref{quicpt}, and since $X$ is quasi-compact, the subset $X\setminus \supph D$ is quasi-compact. By assumption 
$$\begin{array}{rl} 
\{X\setminus \supph C\}_{\langle C\rangle\in S} 
\end{array}\,$$
form an open cover of $X\setminus \supph D$, so there are finitely many $C_i$'s such that
$$\begin{array}{rl} 
\bigcap_{i=1}^{n}\supph C_i \subset \supph D.
\end{array}\,$$
By $(FS2)$, there is a $\langle B\rangle\in S$ such that $\supph B\subset \supph D$. By Proposition \ref{thelemma}, $\langle B\rangle \subset \langle D\rangle$, so by $(FS1)$, $\langle D\rangle\in S$.
\end{proof}
\section{Recovering points from open subsets}
\noindent Recall that a {\em lattice} is a partially ordered set ${\mathcal L}$ which, considered as a category, has all binary products $\wedge$, and all binary coproducts $\vee$. In this section, we shall assume in addition, that lattices have initial and final objects, and that they satisfy the distributive law:
$$\begin{array}{rl} 
U\wedge (V\vee W)=(U\wedge V)\vee (U\wedge W),& \mbox{for all $U,V,W\in {\mathcal L}$}.
\end{array}\,$$
\begin{defi}
Let ${\mathcal L}$ be a lattice with all finite products and all coproducts. A {\em point} of ${\mathcal L}$ is a map
$$\begin{array}{rl} 
p:{\mathcal L}\to \{0,1\},
\end{array}\,$$
of partially ordered sets, which preserves finite products and infinite coproducts. The set of points of a lattice ${\mathcal L}$ will be denoted by $\pt ({\mathcal L})$. 
\end{defi}
\noindent We define a topology on $\pt ({\mathcal L})$ by declaring all subsets which are of the form
$$\begin{array}{rl} 
\{p;\mbox{    } p(U)=1\}, &\mbox{for some $U\in {\mathcal L}$,}
\end{array}\,$$
to be open.\\
 \\
The set $\pt({\mathcal L})$ is in bijection with the set of {\em proper primes} of ${\mathcal L}$, that is, the set of elements $P\in {\mathcal L}$ which are not final in ${\mathcal L}$, and which satisfy
$$\begin{array}{rl} 
\mbox{$U\wedge V\leq P$ if and only if $U\leq P$ or $V\leq P$,  for all $U,V\in {\mathcal L}$}.
\end{array}\,$$
Under this bijection, a point $p\in \pt({\mathcal L})$ corresponds to the coproduct
$$\begin{array}{rl} 
\bigvee_{p(W)=0} W,
\end{array}\,$$
of elements in the kernel of $p$, and a proper prime $P$ corresponds to the map
$$\begin{array}{rl} 
p:{\mathcal L}\to \{0,1\}, & \mbox{$p(U)=0$ if and only if $U\leq P$}.
\end{array}\,$$
\begin{exem}
Let $X$ be a topological space, and consider the lattice ${\mathcal U}(X)$, of open subsets of $X$. A point $x\in X$ determines the point
$$\begin{array}{rl} 
p:{\mathcal U}(X)\to \{0,1\}, & \mbox{$p(U)=0$ if and only if $x\notin U$}.
\end{array}\,$$
Under the bijection with proper primes of ${\mathcal U}(X)$, this point corresponds to 
the element 
$$\begin{array}{rl} 
X\setminus \overline{\{x\}}\in {\mathcal U}(X).
\end{array}\,$$
\end{exem}
\begin{defi}
A topological space $X$ is {\em sober} if every non-empty irreducible closed set $Z$ has a unique generic point, that is, there exists a $z\in Z$ such that
$$\begin{array}{rl} 
Z=\overline{\{z\}}.
\end{array}\,$$
\end{defi}
\noindent Examples of sober spaces include Hausdorff spaces and topological spaces underlying schemes.
\begin{prop} \label{sober}
Let $X$ be a topological space, and let $F$ be the map $x \mapsto X\setminus \overline{\{x\}}$, from $X$ to the set of proper primes of ${\mathcal U}(X)$.
$$\begin{array}{rl} 
(i) &\mbox{The map $F$ is a bijection  if and only if $X$ is sober.}\\
(ii) &\mbox{If $X$ is sober, then $F$ induces a homeomorphism between $X$ and }\\
&\mbox{the space $\pt({\mathcal U}(X))$ of points of ${\mathcal U}(X)$.}
\end{array}\,$$
\end{prop}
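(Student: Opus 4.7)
My plan is to handle the two parts separately and reduce everything to the bijection (established in the excerpt) between $\pt(\mathcal{L})$ and the proper primes of $\mathcal{L}$. Throughout, I would use repeatedly the fact that, for an open $U \subseteq X$, the condition $U \subseteq X \setminus \overline{\{x\}}$ is equivalent to $x \notin U$, since $\overline{\{x\}}$ meets $U$ iff $x \in U$ whenever $U$ is open.

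For part $(i)$, I would first verify that $F(x) = X \setminus \overline{\{x\}}$ is always a proper prime: it is proper because $x \notin F(x)$, and primality follows from the displayed equivalence above, which gives $U \wedge V \leq F(x)$ iff $x \notin U \cap V$ iff $U \leq F(x)$ or $V \leq F(x)$. For the ``if'' direction, assume $X$ is sober. Injectivity of $F$ follows because $\overline{\{x\}} = \overline{\{y\}}$ is irreducible and non-empty, so its unique generic point forces $x = y$. For surjectivity, given a proper prime $P$, the closed set $Z := X \setminus P$ is non-empty (as $P$ is not final). I would show $Z$ is irreducible by supposing $Z = Z_1 \cup Z_2$ with $Z_i$ closed and $Z_i \subsetneq Z$; setting $U_i = X \setminus Z_i$ gives $U_1 \wedge U_2 = P$ with each $U_i \not\leq P$, contradicting primality. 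Soberness then produces the unique $x$ with $\overline{\{x\}} = Z$, and $F(x) = P$. For the ``only if'' direction, the contrapositive is easy: if some non-empty irreducible closed $Z$ has no (respectively, more than one) generic point, then the same irreducibility argument shows $X \setminus Z$ is a proper prime missing the image of $F$ (respectively, hit by more than one $x$).

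Part $(ii)$ reduces to computing the preimage of a basic open. The basic opens of $\pt(\mathcal{U}(X))$ are of the form $\{p : p(U) = 1\}$ for $U \in \mathcal{U}(X)$, which under the bijection with proper primes correspond to $\{P : U \not\leq P\}$. Applying $F^{-1}$ and using the equivalence from the first paragraph, I get
\[
F^{-1}\{P : U \not\leq P\} = \{x \in X : U \not\subseteq X \setminus \overline{\{x\}}\} = \{x \in X : x \in U\} = U.
\]
Hence $F$ pulls every basic open back to an open set in $X$, and pushes every open set in $X$ forward to a basic open. Combined with the bijectivity established in $(i)$, this shows $F$ is a homeomorphism.

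The only step with any real content is the irreducibility argument used in both directions of $(i)$; I expect the main nuisance will be keeping the strict versus non-strict inclusions consistent when translating the decomposition $Z = Z_1 \cup Z_2$ into the primality statement $U_1 \wedge U_2 \leq P$ with $U_i \not\leq P$. Everything else is a matter of unwinding the definitions.
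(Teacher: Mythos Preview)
Your proof is correct and follows the same route as the paper: the key observation in both is that $U\in\mathcal{U}(X)$ is a proper prime precisely when $X\setminus U$ is non-empty and irreducible, and that under $F$ the basic open $\{p:p(U)=1\}$ corresponds exactly to $U$. The paper's proof states these two facts in one line each, while you have carefully unpacked them; there is no substantive difference in strategy.
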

\begin{proof}
$(i)$ Note that an open set $U\in {\mathcal U}(X)$ is a proper prime if and only if its complement $X\setminus U$ is non-empty and irreducible.\\
$(ii)$ Follows from $(i)$, and the fact that the image of an open set $U\subset X$ gets identified with the open set
$$\begin{array}{rl} 
\{p; \mbox{    } p(U)=1\}\subset \pt({\mathcal U}(X)).
\end{array}\,$$
\end{proof}
\noindent Combining Theorem \ref{faith} and Proposition \ref{sober}, we obtain the announced reconstruction result:
\begin{theor} \label{slutsats}
Let $(X,{\mathcal O}_X)$ be a quasi-compact and quasi-separated scheme. Then the underlying topological space $X$ is homeomorphic to the space $\pt\circ {\mathcal Loc}(D^{\perf}(X))$, of points of the lattice of filtering subsets of principal thick subcategories of $D^{\perf}(X)$.
\end{theor}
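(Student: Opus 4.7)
The plan is to deduce Theorem \ref{slutsats} by composing the lattice bijection of Theorem \ref{faith} with the homeomorphism of Proposition \ref{sober}(ii). Concretely, I would establish the chain
$$X \simeq \pt({\mathcal U}(X)) \simeq \pt({\mathcal Loc}(D^{\perf}(X))),$$
where the first homeomorphism is Proposition \ref{sober}(ii) applied to the space $X$, and the second is induced by Theorem \ref{faith}.

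For the first step, I would invoke the fact, noted in the remark preceding Proposition \ref{sober}, that the underlying topological space of any scheme is sober. Consequently Proposition \ref{sober}(ii) applies directly and delivers $X \simeq \pt({\mathcal U}(X))$ via the map $x \mapsto X \setminus \overline{\{x\}}$.

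For the second step, I would observe that the inclusion-preserving bijection ${\mathcal U}(X) \leftrightarrow {\mathcal Loc}(D^{\perf}(X))$ supplied by Theorem \ref{faith} is an order isomorphism of posets. Since finite meets, arbitrary joins, and initial and final objects in a poset are characterized by universal properties depending only on the partial order, any order isomorphism automatically transports the lattice structure from ${\mathcal U}(X)$ onto ${\mathcal Loc}(D^{\perf}(X))$, including the distributive law. In particular, the target satisfies the structural hypotheses needed to apply the $\pt$ construction. The induced bijection on points $\pt({\mathcal U}(X)) \leftrightarrow \pt({\mathcal Loc}(D^{\perf}(X)))$ is then a homeomorphism, because under the lattice isomorphism $U \leftrightarrow g(U)$ the basic open $\{p ;\, p(U)=1\}$ on one side corresponds to the basic open $\{p ;\, p(g(U))=1\}$ on the other, so the two topologies coincide.

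I do not anticipate a genuine obstacle: all the substantive work has been done in Theorem \ref{faith}, and Theorem \ref{slutsats} is essentially a formal packaging of that bijection together with the pointless-topology generalities of \S 3. The only item demanding a moment of care is the verification that the lattice structure actually transports along the bijection of Theorem \ref{faith}, but as noted this reduces to the general principle that an order isomorphism between lattices preserves whichever meets and joins happen to exist in them.
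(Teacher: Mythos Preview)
Your proposal is correct and follows exactly the approach the paper intends: the paper presents Theorem \ref{slutsats} as an immediate consequence of combining Theorem \ref{faith} with Proposition \ref{sober}, without giving any further argument. Your write-up simply spells out the straightforward details of that combination, including the observation that the inclusion-preserving bijection of Theorem \ref{faith} is an order isomorphism and hence transports the lattice structure and the $\pt$ construction.
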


\section*{Acknowledgments}
\noindent The author would like to thank Ryszard Nest. The author was supported by the Danish National Research Foundation (DNRF) through the Centre for Symmetry and Deformation. 

\end{document}